\newcommand{\U}{\mathrm{U}}
\newcommand{\SU}{\mathrm{SU}}
\newcommand{\Hom}{\mathrm{Hom}}
\newcommand{\mft}{\mathfrak{t}}
\newcommand{\RR}{\mathbb{R}}
\newcommand{\CC}{\mathbb{C}}
\newcommand{\ZZ}{\mathbb{Z}}
\newtheorem{thm}{Theorem}[section]
\newtheorem{prop}[thm]{Proposition}
\newtheorem{cor}[thm]{Corollary}
\newtheorem{lem}[thm]{Lemma}
\theoremstyle{definition}
\newtheorem{rem}[thm]{Remark}
\begin{document}

\title{GKM theory and Hamiltonian non-K\"ahler actions in dimension $6$}
\author{Oliver Goertsches\footnote{Philipps-Universit\"at Marburg, email: goertsch@mathematik.uni-marburg.de}, Panagiotis Konstantis\footnote{Universität zu Köln, email: pako@math.uni-koeln.de}, and Leopold Zoller\footnote{Ludwig-Maximilians Universität München, email:
Leopold.Zoller@mathematik.uni-muenchen.de}}

\maketitle
\begin{abstract}
Using the classification of $6$-dimensional manifolds by Wall, Jupp and \v Zubr, we observe that the diffeomorphism type of simply-connected, compact $6$-dimensional integer GKM $T^2$-manifolds is encoded in their GKM graph. As an application, we show that the $6$-dimensional manifolds on which Tolman and Woodward constructed Hamiltonian, non-K\"ahler $T^2$-actions with finite fixed point set are diffeomorphic to Eschenburg's twisted flag manifold $\SU(3)//T^2$. In particular, they admit a noninvariant K\"ahler structure.
\end{abstract}

\section{Introduction}

The first example of a compact Hamiltonian torus action with finite fixed point set not admitting an invariant K\"ahler structure was given by Tolman \cite{Tolman} in the mid 90s. Builing upon her work, Woodward \cite{Woodward} produced a similar example that even extends to a multiplicity-free Hamiltonian action. While Tolman used symplectic gluing of two $6$-dimensional Hamiltonian $T^2$-manifolds that are restrictions of toric symplectic manifolds, Woodward's example is a $\U(2)$-equivariant symplectic surgery of the $6$-dimensional full flag manifold $\U(3)/T^3$.

Until now it was unknown if these examples admit any K\"ahler structure at all, see \cite[Remark, p.\ 309]{Tolman}, as well as if these manifolds are in fact the same; Woodward conjectures them to be equivariantly diffeomorphic, see \cite[Footnote 3]{Woodward}.

In this paper, we compare these examples to a third closely related example, namely a Hamiltonian
$T^2$-action on Eschenburg's twisted flag $\SU(3)//T^2$, constructed in \cite{1812.09689v1}. We
will show that Tolman's and Woodward's examples are (nonequivariantly) diffeomorphic to this
manifold; as it is known \cite[Theorem 2]{Eschenburg2}, \cite{EscherZiller}, \cite[Section
4]{1812.09689v1} that the Eschenburg flag admits a K\"ahler structure we can answer also the
question on the existence of a (noninvariant) K\"ahler structure on these examples in the
affirmative. More precisely, we show in
Theorem \ref{thm:Kahler} that the $T^2$-invariant symplectic form on the Eschenburg flag does
itself admit a compatible complex structure. However, by the work of Tolman such a complex
structure can never be $T^2$-invariant. To our knowledge, this is the first example of
a Hamiltonian action with this property.

The main tools to derive our conclusions are the diffeomorphism classification of $6$-dimensio\-nal manifolds by Wall \cite{MR0215313}, Jupp \cite{MR0314074} and \v Zubr \cite{MR970082} and integer equivariant cohomology, most importantly GKM theory. We show that for GKM manifolds all necessary topological invariants, i.e., the integer cohomology ring as well as the first Pontrjagin class and the second Stiefel-Whitney class, are encoded in the GKM graph of the action; for Hamiltonian GKM actions with connected isotropy groups, in the so-called x-ray.
\\

\noindent {\bf Acknowledgements.} We would like to thank to Maximilian Schmitt for drawing our attention to Tolman's and Woodward's examples. We also want to express our gratitude towards Nicholas Lindsay and Dmitri Panov for helpful comments which led to the formulation and proof of Theorem \ref{thm:Kahler}.
The third named author is supported by the German
Academic Scholarship foundation.

\section{Preliminaries}

Throughout this paper we consider actions of compact tori $T$ on closed,  connected manifolds $M$, as well as their equivariant cohomology $H^*_T(M,\ZZ)$ with integer coefficients. This is the cohomology of the Borel construction
\[
ET\times_T M,
\]
equipped with the $H^*(BT,\ZZ)$-algebra structure induced by the natural projection $ET \times_T M \to BT$. Note that for a $k$-dimensional torus $T$ we have $H^*(BT,\ZZ)\cong \ZZ[u_1,\ldots,u_k]$, where the $u_i$ are the transgressions of the generators of $H^*(T,\ZZ)$ in the fibration $T\to ET\to BT$, see \cite[p.\ 410f]{Borel}.

For a $T$-action on $M$, we denote by $M^T$ the fixed point set, and by $M_1=\{p\in M\mid \dim T\cdot p \leq 1\}$ the \emph{one-skeleton} of the action. In case that $M$ is orientable, we say that the action satisfies the \emph{GKM conditions} (named after Goresky--Kottwitz--MacPherson \cite{GKM}) if $M^T$ is a finite set of points, and $M_1$ is a finite union of $T$-invariant two-spheres. Note that many papers include the vanishing of the odd degree cohomology into the GKM conditions, but as we consider integer instead of the more common real coefficients, we will list this condition separately in our results.

We denote by $\ZZ_\mft^*\subset \mft^*$ the weight lattice of $T$. The GKM condition implies that at any fixed point $p$ the isotropy representation decomposes into two-dimensional irreducible summands, with weights $\alpha_1,\ldots,\alpha_n$, where the dimension of $M$ is $2n$. These weights are elements in $\ZZ_\mft^*/\pm 1$.

To an action of a torus $T$ on $M$ satisfying the GKM conditions one associates the \emph{GKM graph} $\Gamma$ as follows. Its vertex set $V(\Gamma)$ contains one vertex for each fixed point, and its set of (unoriented) edges $E(\Gamma)$ contains one edge for every invariant two-sphere $S$, connecting the two vertices corresponding to the two fixed points in $S$. In other words, the GKM graph is the space $M_1/T$, considered as a graph. Additionally we label every edge $e\in E(\Gamma)$ with the weight $\alpha_e$ of the corresponding two-sphere.

If we are given a $T$-invariant almost complex or symplectic structure on $M$, then the weights are well-defined elements of $\ZZ_\mft^*$ and we consider a \emph{signed} variant of the \emph{GKM graph}. The underlying graph is the same, but the label of an edge now associates a unique sign to each possible orientation of the edge in the following way: for an oriented edge $e$ we denote the initial vertex by $i(e)$ and the terminal vertex by $t(e)$. The weight $\alpha_e$ is then by definition the weight of the isotropy representation of the action on the two-sphere at the point $i(e)$. Note that the weight of the same two-sphere at $t(e)$ is then $-\alpha_e$. This is the same as $\alpha_{\bar{e}}$, where $\bar{e}$ denotes the edge $e$, but inverted.

If we have GKM actions of a torus $T$ on two closed, connected manifold $M$ and $N$, then an \emph{isomorphism} of the GKM graphs $\Gamma_M$ and $\Gamma_N$ of $M$ and $N$ consists of an isomorphism $\varphi\colon \Gamma_M\to \Gamma_N$ of abstract graphs, together with an automorphism $\psi\colon T\to T$ which intertwines the labels, i.e., $\alpha_{\varphi(e)} = \alpha_{e}\circ (d\psi)^*\in \ZZ_\mft^*/\pm 1$. If we are given invariant almost complex structures on $M$ and $N$, then we ask an isomorphism of the signed GKM graphs to respect the signed labels. This notion of isomorphism of signed GKM graphs is the same as that in \cite[Definition 3.1]{GuilleminSabatiniZara}, but note that other natural notions are possible. For example, the notion of isomorphism used in \cite{FranzYamanaka} does not include an automorphism of $T$.

\section{Topological invariants and diffeomorphism type via GKM theory}\label{sec:topinvgkm}
The goal of this section is to show the following theorem, which states that in dimension $6$, the diffeomorphism type of a GKM manifold is determined by its graph.

\begin{thm}\label{thm:gkm6dim}
 Let $M$ and $N$ be compact, orientable, connected, smooth manifolds satisfying
  $H^{odd}(M,\ZZ)=H^{odd}(N,\ZZ)=0$. Consider actions of a torus $T$ on $M$ and $N$ satisfying the GKM conditions, such that for
  all $p\notin M_1$, the isotropy group $T_p$ is contained in a proper subtorus of $T$, and analogously for $N$. Let further $\varphi\colon\Gamma_M\to \Gamma_N$ be an isomorphism of GKM graphs. Then:
 \begin{enumerate}[label=(\alph*)]
   \item The isomorphism $\varphi$ induces an isomorphism $H^\ast(N,\ZZ)\to H^\ast(M,\ZZ)$
      which maps the Pontrjagin classes of $N$ to those of $M$, and is such that the induced isomorphism $H^\ast(N,\ZZ_2)\cong H^\ast(N,\ZZ)\otimes \ZZ_2\to H^\ast(M,\ZZ)\otimes \ZZ_2\cong H^\ast(M,\ZZ_2)$ maps the Stiefel-Whitney classes of $N$ to those of $M$.
   \item If $M$ and $N$ are additionally $6$-dimensional and simply-connected, then the isomorphism from (a) is induced by a (nonequivariant) diffeomorphism $M\to N$.
   \item Assume, in the situation of (a), that $M$ and $N$ are equipped with $T$-invariant almost complex structures, and that $\varphi$ is an isomorphism of signed GKM graphs. Then the isomorphism $H^*(N,\ZZ)\to H^*(M,\ZZ)$ also maps the Chern classes of $N$ to the Chern classes of $M$. With the additional assumptions from (b), the diffeomorphism $M\to N$ then also respects the homotopy class of the almost complex structures.
 \end{enumerate}
\end{thm}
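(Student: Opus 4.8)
The strategy is to show that the labelled graph $\Gamma_M$ determines the equivariant cohomology ring $H^*_T(M,\ZZ)$ together with the equivariant characteristic classes of $TM$, to deduce the statements about ordinary characteristic classes by restricting to the fibre of $ET\times_T M\to BT$, and finally to feed the resulting data into the Wall--Jupp--\v Zubr classification for (b) and into obstruction theory for (c).

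\emph{The graph determines $H^*_T(-,\ZZ)$.} Since $M$ is closed, connected and orientable with $H^{odd}(M,\ZZ)=0$, Poincar\'e duality forces $\dim M$ to be even, and Poincar\'e duality together with the universal coefficient theorem then forces $H^*(M,\ZZ)$ to be torsion-free. Hence the $E_2$-page $H^*(BT,\ZZ)\otimes_\ZZ H^*(M,\ZZ)$ of the Serre spectral sequence of $M\to ET\times_T M\to BT$ is concentrated in bidegrees with both entries even (recall $H^{odd}(BT,\ZZ)=0$ and $H^*(BT,\ZZ)$ is torsion-free), so for parity reasons all differentials vanish and the action is equivariantly formal over $\ZZ$ (hence over $\ZZ_2$). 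Consequently $H^*_T(M,\ZZ)$ is a free $H^*(BT,\ZZ)$-module with $H^*_T(M,\ZZ)\otimes_{H^*(BT,\ZZ)}\ZZ\cong H^*(M,\ZZ)$ as rings, and the restriction map $H^*_T(M,\ZZ)\hookrightarrow H^*_T(M^T,\ZZ)=\bigoplus_{p\in M^T}H^*(BT,\ZZ)$ is injective. The GKM conditions, together with the hypothesis that $T_p$ lies in a proper subtorus for $p\notin M_1$, guarantee that $M^H\subseteq M_1$ for every corank-one subtorus $H$ and that the integral Chang--Skjelbred/GKM description holds, identifying this image with the subring of tuples $(f_p)_p$ satisfying $f_{i(e)}\equiv f_{t(e)}\pmod{\alpha_e}$ for all $e\in E(\Gamma_M)$ --- a subring depending only on the labelled graph $\Gamma_M$. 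An isomorphism $(\varphi,\psi)$ of GKM graphs therefore induces a ring isomorphism $H^*_T(N,\ZZ)\to H^*_T(M,\ZZ)$ that is semilinear over the induced automorphism $\psi^*$ of $H^*(BT,\ZZ)$, and reducing modulo the augmentation ideal of $H^*(BT,\ZZ)$ yields the ring isomorphism $\Phi\colon H^*(N,\ZZ)\to H^*(M,\ZZ)$ of part (a).

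\emph{Characteristic classes (part (a), and the Chern part of (c)).} As $TM$ is $T$-equivariant it carries equivariant Pontrjagin classes in $H^*_T(M,\ZZ)$ and equivariant Stiefel--Whitney classes in $H^*_T(M,\ZZ_2)$, restricting to the ordinary ones under restriction to the fibre. By injectivity of restriction to $M^T$ these are determined by $p^T(TM)|_p=\prod_j(1+(\alpha^p_j)^2)$ and $w^T(TM)|_p=\prod_j(1+\overline{\alpha^p_j})$, where $\alpha^p_1,\dots,\alpha^p_n$ are the isotropy weights at $p$; these are exactly the labels at $p$ of the edges of $\Gamma_M$, because the invariant two-spheres through $p$ are in bijection with the two-dimensional weight summands (a sign ambiguity being irrelevant here). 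Hence $p^T(TM)$ and $w^T(TM)$ are read off from the unsigned graph, the isomorphism of the first step matches them with $p^T(TN)$ and $w^T(TN)$, and reducing modulo the augmentation ideal --- using $H^*(-,\ZZ_2)\cong H^*(-,\ZZ)\otimes\ZZ_2$ by torsion-freeness --- proves part (a). If $M,N$ carry invariant almost complex structures the weights lie in $\ZZ^*_\mft$ and the signed labels record them with signs; the identical computation with $c^T(TM)|_p=\prod_j(1+\alpha^p_j)$ shows that an isomorphism of \emph{signed} GKM graphs forces $\Phi(c(TN))=c(TM)$, in particular $\Phi$ matches the Chern classes.

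\emph{Realising $\Phi$ by a diffeomorphism (part (b)), and the homotopy class of the almost complex structure.} Now assume $\dim M=\dim N=6$ and $M,N$ simply-connected. By the first step $H^*(M,\ZZ),H^*(N,\ZZ)$ are torsion-free (and have vanishing $H^3$); orienting $M$ and $N$ so that $\Phi$ preserves the generator of $H^6$ (if not, reverse the orientation of $N$, which affects neither $p_1$ nor $w_2$), part (a) makes $\Phi$ an orientation-preserving ring isomorphism carrying $p_1$ to $p_1$ and, after $\otimes\ZZ_2$, $w_2$ to $w_2$. By the diffeomorphism classification of closed, simply-connected, smooth $6$-manifolds with torsion-free integral homology (Wall, Jupp, \v Zubr), $\Phi$ is then induced by an orientation-preserving diffeomorphism $f\colon M\to N$. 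In the almost complex case, $c_3(TM)=e(TM)$ and $\langle e(TM),[M]\rangle=\chi(M)=|M^T|>0$ equals $\langle e(TN),[N]\rangle$, so the equality $\Phi(c_3(N))=c_3(M)$ shows that $\Phi$ preserves the orientations induced by the almost complex structures; taking these as the orientations above, $f$ is orientation-preserving for them. Finally, $\SO(6)/\U(3)\cong\CC P^3$ is $6$-equivalent to $K(\ZZ,2)$, so once a compatible almost complex structure exists the homotopy classes of such structures on a closed $6$-manifold form a torsor over $H^2(\,\cdot\,,\ZZ)$ on which $J\mapsto c_1(J)$ is injective (using torsion-freeness of $H^2$); since $J_M$ and $f^*J_N$ induce the same orientation and $c_1(f^*J_N)=f^*c_1(J_N)=\Phi(c_1(J_N))=c_1(J_M)$, we conclude $f^*J_N\simeq J_M$.

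\emph{The main obstacle.} The substantial point is the first step: establishing over $\ZZ$, rather than merely over $\mathbb{Q}$, both equivariant formality and the identification of $H^*_T(-,\ZZ)$ with the congruence ring of the graph. Equivariant formality is not hard given $H^{odd}(M,\ZZ)=0$, but the \emph{integral} Chang--Skjelbred identification is exactly where the hypothesis on isotropy groups of points outside $M_1$ is used and where one must argue carefully about the integral equivariant cohomology of the invariant two-spheres and their gluings; everything afterwards is either a direct fixed-point computation or an appeal to the cited classification and to standard obstruction theory.
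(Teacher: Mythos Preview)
Your proof is correct and follows essentially the same approach as the paper: recover $H^*_T$ from the graph via equivariant formality and the integral Chang--Skjelbred lemma, compute equivariant characteristic classes at fixed points and restrict to the fibre, then invoke Wall--Jupp--\v Zubr for (b) and the fact that $c_1$ determines the homotopy class of $J$ for (c). The only differences are presentational: you supply self-contained arguments (the Serre spectral sequence parity argument for formality, the explicit congruence description of the GKM ring, and the obstruction-theoretic identification $\SO(6)/\U(3)\cong\CC P^3\simeq_6 K(\ZZ,2)$ for (c)) where the paper cites the literature, and you are more careful than the paper about fixing compatible orientations before applying the classification.
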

The assumption on the isotropy groups is obviously satisfied if all isotropy groups are connected. By \cite[Lemma 2.1]{MR2283418}, for a $T$-action on a closed, compact, orientable manifold $M$ with finite fixed point set, the vanishing of $H^{odd}(M,\ZZ)$ is equivalent to the freeness of the $H^*(BT,\ZZ)$-module $H^*_T(M,\ZZ)$. Also, by \cite[Theorem 5.1]{MR2308029}, the equivariant cohomology $H^*_T(M,\ZZ)$ of a compact Hamiltonian $T$-manifold with connected isotropy groups, such that the cohomology of the fixed point set is torsion-free, is a free module. This directly implies the following corollary of Theorem \ref{thm:gkm6dim}:

\begin{cor}\label{cor:mainthm}
Given a Hamiltonian $T^2$-action with connected isotropy groups and satisfying the GKM conditions on a simply-connected compact $6$-dimensional symplectic manifold $M$, the (nonequivariant) diffeomorphism type of $M$ and the homotopy class of a compatible almost complex structure are encoded in the signed GKM graph of the action.
\end{cor}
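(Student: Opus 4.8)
The plan is to verify that the hypotheses of Theorem~\ref{thm:gkm6dim} are met and that the signed GKM graph carries at least as much information as a GKM-graph isomorphism needs, so that the diffeomorphism statement follows formally. Concretely, suppose $M$ and $M'$ are two simply-connected compact $6$-dimensional symplectic manifolds carrying Hamiltonian $T^2$-actions with connected isotropy groups satisfying the GKM conditions, and suppose their signed GKM graphs are isomorphic. I first observe that connected isotropy groups in particular satisfy the condition in Theorem~\ref{thm:gkm6dim} that for $p\notin M_1$ the group $T_p$ lies in a proper subtorus (a connected proper subgroup of $T^2$ is contained in a subtorus). Next I need $H^{odd}(M,\ZZ)=H^{odd}(M',\ZZ)=0$: this is exactly the content of the two cited results quoted right after Theorem~\ref{thm:gkm6dim}. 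By \cite[Theorem 5.1]{MR2308029}, a compact Hamiltonian $T$-manifold with connected isotropy groups whose fixed point set has torsion-free cohomology has free $H^*(BT,\ZZ)$-module equivariant cohomology; here the fixed point set is finite, hence certainly torsion-free. Then by \cite[Lemma 2.1]{MR2283418}, freeness of $H^*_T(M,\ZZ)$ for a $T$-action with finite fixed point set is equivalent to $H^{odd}(M,\ZZ)=0$. So both $M$ and $M'$ satisfy the cohomological vanishing hypothesis.

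With these hypotheses in place, an isomorphism of the \emph{signed} GKM graphs is in particular an isomorphism of GKM graphs in the sense of Section~2, so part~(b) of Theorem~\ref{thm:gkm6dim} applies and yields an orientation-preserving diffeomorphism $M\to M'$, while part~(c) — whose hypotheses (invariant almost complex structure, signed graph isomorphism) are exactly what we have, since a symplectic manifold with invariant symplectic form carries a canonical homotopy class of compatible invariant almost complex structures — guarantees that this diffeomorphism respects the homotopy classes of these almost complex structures. This is precisely the assertion of the corollary.

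The only genuine content beyond quoting Theorem~\ref{thm:gkm6dim} is the bookkeeping that a Hamiltonian $T^2$-action with the stated properties produces a well-defined signed GKM graph and falls under the theorem's hypotheses; I expect the mild point requiring a word of care to be the passage from ``connected isotropy groups'' to ``$T_p$ contained in a proper subtorus for $p\notin M_1$'', together with noting that the compatible invariant almost complex structure exists and is unique up to homotopy (standard, via contractibility of the space of compatible complex structures on a symplectic vector bundle, applied equivariantly). No calculation is needed; the corollary is a formal consequence of Theorem~\ref{thm:gkm6dim} once the hypotheses are checked.
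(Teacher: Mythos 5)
Your proposal is correct and follows essentially the same route as the paper: connected isotropy groups give the proper-subtorus condition, the cited results (\cite[Theorem 5.1]{MR2308029} for freeness of $H^*_T(M,\ZZ)$ and \cite[Lemma 2.1]{MR2283418} for the equivalence with $H^{odd}(M,\ZZ)=0$) supply the cohomological vanishing, and then parts (b) and (c) of Theorem \ref{thm:gkm6dim} are applied, with the standard existence and uniqueness up to homotopy of an invariant compatible almost complex structure. No gaps.
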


To understand the relation to part (c) in the Theorem \ref{thm:gkm6dim} we remind the reader that it is possible to define Chern classes for symplectic manifolds
$(M,\omega)$. Indeed, if $J$ is a compatible almost complex structure, then one defines the total
Chern class $c(M,J)$ as the total Chern class of the complex vector bundle $(TM,J)$. The class $c(M,J)$ is
independent of the choice of $J$: if $J'$ is another almost complex structure adapted to the
symplectic form $\omega$, then consider $g$ and $g'$ Riemannian metrics associated to
$(\omega,J)$ and
$(\omega,J')$ respectively. The metrics $g_t := (1-t)g + tg'$ for $t\in [0,1]$ define a path
of compatible almost complex structures $J_t$ with respect to $\omega$ such that $J_0 =J$ and $J_1 = J'$. Thus $J$ and $J'$ are
homotopic and therefore $c(M,J) = c(M,J')$.

\begin{rem}\label{rem:xraygkm} For a Hamiltonian action of a torus $T$ on a compact symplectic manifold $M$, with momentum map $\mu\colon M\to \mft^*$, Tolman \cite{Tolman} and Woodward \cite{Woodward} considered the so-called \emph{x-ray} of the action, which is defined as follows: For any isotropy subgroup $H=T_p$ of the action, the image $\mu(N)$ of a component $N$ of the fixed point set $M^H$ of $H$ is convex polytope, contained in the convex polytope $\mu(M)$; it is the convex hull of the images of those $T$-fixed points that are contained in $N$. The x-ray is the collection of all these polytopes, for all isotropy subgroups $H$ and components $N$. (Tolman also includes the orbit type stratification of $M$ into the definition of the x-ray.) In Section \ref{sec:TWE} we will encounter several examples of x-rays.

If the Hamiltonian $T$-action additionally satisfies the GKM conditions, then the GKM graph can almost be read off from the x-ray: the vertices and edges are precisely given by the zero- and one-dimensional polytopes in the x-ray. Any one-dimensional polytope corresponds precisely to a two-sphere in the one-skeleton of the action, hence to an edge in the GKM graph. The slope of the polytope is a positive multiple of the corresponding weight of the isotropy representation. The x-ray therefore determines the labels of the GKM graph up to a positive multiple. If all the isotropy groups of points in the one-skeleton are connected, then the weights are primitive elements of $\ZZ_{\mft}^*$, hence uniquely determined by the x-ray. This situation will occur in Tolman's and Woodward's examples, in Section \ref{sec:TWE} below.
\end{rem}

We start the proof of Theorem \ref{thm:gkm6dim} with the dimension-independent considerations. In the proof of \cite[Lemma 2.1]{MR2283418} it is shown that for a smooth compact $T$-manifold with $H^{odd}(M,\ZZ)=0$ and finite fixed point set the natural map
\[
H^*_T(M,\ZZ) \longrightarrow H^*(M,\ZZ)
\]
is surjective, with kernel equal to the ideal generated by the image of $H^{>0}(BT,\ZZ)$ in $H^*_T(M,\ZZ)$. In particular, in this situation the ordinary cohomology ring $H^*(M,\ZZ)$ is determined by the equivariant cohomology ring. If moreover for all $p\notin M_1$, the isotropy group $T_p$ is contained in a proper subtorus of $T$, then it is shown in \cite[Corollary 2.2]{FranzPuppe} that the Chang-Skjelbred lemma holds true, i.e., that there is an exact sequence
\begin{equation}\label{eq:chang}
0 \longrightarrow H^*_T(M,\ZZ) \longrightarrow H^*_T(M^T,\ZZ)\longrightarrow H^*_T(M_1,M^T,\ZZ).
\end{equation}
In other words, the image of the restriction map $H^*_T(M,\ZZ)\longrightarrow H^*_T(M^T,\ZZ)$ equals that of the restriction map $H^*_T(M_1,\ZZ)\longrightarrow H^*_T(M^T,\ZZ)$. Now, given that the GKM conditions hold true, the $T$-space $M_1$ is determined entirely by the GKM graph of the action: $M_1$ is a finite union of two-spheres, acted on by $T$ in a way determined by the corresponding labels, joined together at the fixed points as prescribed by the graph. This shows that in the situation of Theorem \ref{thm:gkm6dim} the GKM graph determines the equivariant, and hence the ordinary cohomology: an isomorphism of GKM graphs $\varphi\colon \Gamma_M\to \Gamma_N$ of two GKM manifolds $M$ and $N$ induces a homeomorphism of one-skeleta $\varphi_1\colon M_1\to N_1$, which is twisted equivariant with respect to the automorphism $\psi\colon T\to T$. This homeomorphism, restricted to the respective set of fixed points, defines an isomorphism of equivariant cohomologies $H^*_T(N^T,\ZZ)\to H^*_T(M^T,\ZZ)$ which is twisted $H^*(BT,\ZZ)$-linear with respect to $\psi$. It restricts to an isomorphism of the images of the equivariant cohomologies of the one-skeleta, and thus we obtain a twisted linear isomorphism of equivariant cohomologies $H^*_T(N,\ZZ)\to H^*_T(M,\ZZ)$. Dividing by the ideals generated by the image of $H^{>0}(BT,\ZZ)$ we obtain an isomorphism $H^*(N,\ZZ)\to H^*(M,\ZZ)$.

We now (still in arbitrary dimension) show that the Pontrjagin and Stiefel-Whitney classes of $M$
are encoded in the GKM graph. To do this, we need to consider the equivariant versions of these
characteristic classes defined through the Borel model,  c.f. \cite{MR1850463}. Consider
a characteristic class $c$, where we assume that $c$ lies in a cohomology group
with coefficients in a ring $R$. Suppose now $\pi \colon E \to M$ is a $T$-equivariant\footnote{Of course this can be conducted for any compact Lie group.} vector
bundle. Then
\begin{equation}\label{eq:borelbundle}
\pi_T := \textrm{id} \times \pi \colon ET \times_T E \to ET \times_T M
\end{equation}
defines a new $T$-equivariant vector bundle over the homotopy quotient $ET \times_T M$. Thus it is possible to consider the \emph{equivariant
class}
\[
  c_T(E) := c(ET \times_T E) \in H^\ast(ET\times_T M, R) = H^\ast_T(M, R).
\]
In this way we obtain the \emph{(integral) equivariant Pontrjagin class} or the \emph{equivariant
Stiefel-Whitney class}. If $\pi\colon E \to M$ is a complex vector bundle and the $T$-action
preserves the complex structure then $\pi_T$ is again a complex vector bundle and in this way we obtain the \emph{(integral) equivariant Chern class}, cf.\ \cite[pp. 290]{MR1150492}.

\begin{prop}
Let $M$ and $N$ be GKM manifolds. Then for any isomorphism between the GKM graphs of $M$ and $N$, the induced isomorphism $H^*(N,\ZZ)\to H^*(M,\ZZ)$ sends the Pontrjagin classes of $N$ to those of $M$. After tensoring with $\ZZ_2$, the same holds for the Stiefel-Whitney classes. If there are invariant almost complex structures on $M$ and $N$, and $\varphi$ is an isomorphism of signed GKM graphs, the same holds for the Chern classes.
\end{prop}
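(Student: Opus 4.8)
The plan is to prove the sharper statement that the isomorphism $\Phi\colon H^*_T(N,\ZZ)\ra H^*_T(M,\ZZ)$ constructed above already matches the corresponding \emph{equivariant} characteristic classes, and then to pass to ordinary cohomology. The two structural inputs are: first, $\Phi$ fits into a commutative square with the isomorphism $\Phi^T\colon H^*_T(N^T,\ZZ)\ra H^*_T(M^T,\ZZ)$ coming from the induced (twisted equivariant) homeomorphism of fixed point sets, the vertical arrows being the restrictions $H^*_T(M,\ZZ)\ra H^*_T(M^T,\ZZ)$ and $H^*_T(N,\ZZ)\ra H^*_T(N^T,\ZZ)$, which are injective by the Chang--Skjelbred sequence \eqref{eq:chang}; and second, by naturality of characteristic classes under the Borel construction, dividing $H^*_T(-,\ZZ)$ by the ideal generated by $H^{>0}(BT,\ZZ)$ is restriction along the fibre inclusion $M\hookrightarrow ET\times_T M$, under which the equivariant Pontrjagin, Stiefel--Whitney and Chern classes of the Borel bundle restrict to the ordinary ones of $M$. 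Hence it suffices to verify, for each $p\in M^T$, that $\Phi(p_T(TN))$ and $p_T(TM)$ have equal restriction to $p$ (and analogously for the other classes), since the ordinary statement is then obtained by quotienting.

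To carry out the fixed-point computation, let $\alpha_1,\dots,\alpha_n\in\ZZ_\mft^*/\pm1$ be the isotropy weights at $p\in M^T$, so that $T_pM\cong\bigoplus_{i=1}^n\RR^2_{\alpha_i}$ as $T$-representations; applying the Borel construction summand by summand yields $p_T(TM)|_p=\prod_{i=1}^n(1+\alpha_i^2)\in H^*(BT,\ZZ)$, where $\alpha_i^2$ is the well-defined square of the degree-two class $\alpha_i$ --- the insensitivity to the signs of the $\alpha_i$ is exactly why the unsigned graph suffices here. The GKM conditions force the weights at $p$ to be pairwise non-proportional, hence to be precisely the labels of the edges of $\Gamma_M$ at $p$, and the same holds at $\varphi(p)$ for $N$; so the label condition for an isomorphism of GKM graphs says exactly that the weights at $p$ and those at $\varphi(p)$ correspond under the automorphism of $\ZZ_\mft^*$ induced by $\psi$. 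Since, by naturality of the Borel construction, this automorphism is the degree-two part of the ring automorphism of $H^*(BT,\ZZ)$ with respect to which $\Phi$ and $\Phi^T$ are twisted-linear, and since $\prod_i(1+\alpha_i^2)$ is a universal symmetric function of the weights, a direct check (with the correct bookkeeping of variances, see below) gives that the restrictions of $\Phi(p_T(TN))$ and $p_T(TM)$ to $p$ coincide. Varying $p$, we get $\Phi(p_T(TN))=p_T(TM)$, and dividing out $H^{>0}(BT,\ZZ)$ yields the statement for the ordinary Pontrjagin classes.

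The Stiefel--Whitney case is the same argument over $\ZZ_2$. Because $H^{odd}(M,\ZZ)=0$ and $M^T$ is finite, $H^*_T(M,\ZZ)$ is a free $H^*(BT,\ZZ)$-module, so $H^*(M,\ZZ)$ is torsion-free; consequently $H^{odd}(M,\ZZ_2)=0$ and $H^*(M,\ZZ_2)\cong H^*(M,\ZZ)\otimes\ZZ_2$ (likewise for $N$), the Chang--Skjelbred lemma persists over $\ZZ_2$, and the whole discussion carries through, the restriction of $w_T(TM)$ to $p$ being $\prod_{i=1}^n(1+\bar\alpha_i)$ with $\bar\alpha_i\in H^2(BT,\ZZ_2)$ the (now genuinely unambiguous) mod-two reduction of $\alpha_i$. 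Naturality in the coefficients identifies the resulting $\ZZ_2$-isomorphism with the mod-two reduction of the integral one, and it sends $w(TN)$ to $w(TM)$. Finally, if $M$ and $N$ carry invariant almost complex structures and $\varphi$ respects the signed labels, then the weights at a fixed point are honest elements of $\ZZ_\mft^*$, the restriction of the equivariant Chern class to $p$ is $\prod_{i=1}^n(1+\alpha_i)$, and the signed label condition matches the signed weights; the identical reasoning then carries the Chern classes onto each other.

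The step I expect to require the most care is the one I folded into the appeal to naturality under the $\psi$-twist in the second paragraph: one must check that the torus automorphism $\psi$ enters with mutually inverse variances in the two places it occurs --- in the label condition $\alpha_{\varphi(e)}=\alpha_e\circ(d\psi)^*$ relating $\Gamma_M$ and $\Gamma_N$, and in the twisted linearity of $\Phi$, equivalently in the fixed-point restriction formula for $\Phi^T$ --- so that, in the computation at a fixed point, these two effects cancel and the symmetric function of the weights representing an equivariant characteristic class at $\varphi(p)$ really is transported to the one at $p$. Once this compatibility is pinned down, everything else --- the fixed-point restrictions of $p_T$, $w_T$, $c_T$ and the passage to ordinary cohomology --- is routine.
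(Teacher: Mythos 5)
Your proposal is correct and takes essentially the same route as the paper: restrict to the fixed point set, where the equivariant tangent bundle and hence its equivariant characteristic classes are determined by the (signed) weights encoded in the GKM graph and transported by the $\psi$-twisted isomorphism, use injectivity of $H^*_T(M)\to H^*_T(M^T)$ to conclude equality of equivariant classes, and pass to ordinary classes via the fiber inclusion of the Borel fibration. The only minor difference is that for injectivity with $\ZZ_2$-coefficients the paper does not claim a mod $2$ Chang--Skjelbred lemma but simply applies the Borel localization theorem using that $H^*_T(M,\ZZ_2)\cong H^*_T(M,\ZZ)\otimes\ZZ_2$ is a free $H^*(BT,\ZZ_2)$-module, which is all that is needed and is available to you by the same torsion-freeness argument you give.
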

\begin{proof}
Upon restriction to the fixed point sets $M^T$ and $N^T$, the tangent bundles become $T$-equivariant vector bundles over discrete point sets, which are uniquely determined by the weights of the isotropy representation at the fixed points, i.e., by data contained in the GKM graph. Thus, pulling back $TN|_{N^T}$ with ${\varphi_1}|_{M^T}$ and composing the action with $\psi$ we obtain the $T$-vector bundle $TM|_{M^T}$. Hence the twisted linear isomorphism $H^*_T(N^T,\ZZ)\to H^*_T(M^T,\ZZ)$ intertwines the equivariant Pontrjagin classes of these bundles, and, after tensoring with $\ZZ_2$, also the equivariant Stiefel-Whitney classes. The same holds for the equivariant Chern classes in the presence of an invariant almost complex structure.

By naturality, upon restriction to $M^T$ and $N^T$, the equivariant characteristic classes of $M$ and $N$ are mapped to the corresponding characteristic classes of the bundles described above. As in our situation $H^*_T(M,\ZZ)$ is a free $H^*(BT,\ZZ)$-module (and after tensoring with $\ZZ_2$, also $H^*_T(M,\ZZ_2)$ a free $H^*(BT,\ZZ_2)$-module), the Borel localization theorem (see e.g.\ \cite[Theorem (3.2.6)]{AlldayPuppe}) implies that the inclusion map $M^T\to M$ induce injective homomorphisms in equivariant cohomology, both with integer and $\ZZ_2$ coefficients. The same holds for $N$. This shows that the isomorphism $H^*_T(N,\ZZ)\to H^*_T(M,\ZZ)$ intertwines the equivariant characteristic classes of $N$ and $M$.

The isomorphism $H^*(N,\ZZ)\to H^*(M,\ZZ)$ defined above was constructed by dividing by the ideals $(H^{>0}(BT,\ZZ)\cdot H^*_T(N,\ZZ))$ and $(H^{>0}(BT,\ZZ)\cdot H^*_T(M,\ZZ))$. The natural projection
\[
H^*_T(M,\ZZ)\to H^*_T(M,\ZZ)/(H^{>0}(BT,\ZZ)\cdot H^*_T(M,\ZZ))\cong H^*(M,\ZZ)
\] is nothing but the map induced by the fiber inclusion of the fibration $M\to ET\times_T M \to BT$. Pulling back the vector bundle $ET\times_T TM \to ET \times_T M$ via this fiber inclusion we get back the original bundle $TM \to M$. This implies by naturality that the equivariant characteristic classes are mapped to the ordinary characteristic classes, and hence the isomorphism $H^*(N,\ZZ)\to H^*(M,\ZZ)$ intertwines the ordinary characteristic classes.
\end{proof}

This completes the proof of part (a) and the first statement in part (c) of
Theorem \ref{thm:gkm6dim}. Although it is not necessary for the proof of the main theorem we want
to complement the above discussion with explicit formulas for the computation of the
characteristic classes from the GKM graph. \begin{prop}\label{prop:Formeln} For a $T$-manifold
  $M$ with finite fixed point set, the restriction of the total equivariant Pontrjagin class of
  $M$ to $M^T$ is given by
\[
\sum_{i=1}^n \prod_{j=1}^m (1+\alpha_{ij}^2)\in \bigoplus_{i=1}^n H^*(BT,\ZZ),
\]
where $M^T=\{p_1,\ldots,p_n\}$, and $\pm\alpha_{ij}\in H^2(BT,\ZZ)$ are the weights (up to sign) of the isotropy representation of $T$ on $T_{p_i}M$. The restriction of the total equivariant Stiefel-Whitney class of $M$ to $M^T$ is given by
\[
\sum_{i=1}^n\prod_{j=1}^m (1\pm\alpha_{ij}) \in \bigoplus_{i=1}^n H^*(BT,\ZZ_2).
\]
Moreover, if there is an invariant almost complex structure on $M$, the signs of the $\alpha_{ij}$ are uniquely determined and the restriction of the total Chern class equals
\[
\sum_{i=1}^n\prod_{j=1}^m (1+\alpha_{ij})\in \bigoplus_{i=1}^n H^*(BT,\ZZ).
\]
\end{prop}

\begin{proof} All three expressions follow from the naturality of characteristic classes which reduces the problem to calculating the characteristic classes of the equivariant bundles $T_{p_i}M\to \{p_i\}$ over the fixed points. The expression for the total equivariant Pontrjagin class was given in \cite{MR3456711}, see the end of Section $2$ therein. It follows because the isotropy representation $T_{p_i}M$ admits an invariant almost complex structure; by \cite[Lemma 6.10]{MR1150492} its total equivariant Chern class is given by $\prod_{j=1}^m (1 \pm \alpha_{ij})$, and \cite[Corollary 15.5]{MR0440554} implies that the Pontrjagin classes are determined by the Chern classes in the above manner.

In the presence of an invariant almost complex structure on $M$, the choice of almost complex structure on the $T_{p_i}M$ is canonical and we obtain the expression for the Chern classes with unique signs.

Finally, the statement for the total equivariant Stiefel-Whitney class uses the same argument, combined with the fact that the mod $2$ reduction of the total Chern class of a complex vector bundle is the total Stiefel-Whitney class, see \cite[Problem 14-B, p.\ 171]{MR0440554}. This was, in the context of quasitoric manifolds, also used in \cite[Corollary 6.7]{MR1104531}.
\end{proof}

We now specialize to the $6$-dimensional setting. To finish the proof of Theorem \ref{thm:gkm6dim}, we need to invoke the diffeomorphism classification of simply-connected smooth $6$-dimensional manifolds \cite{MR0314074} which we now summarize, specialized to the case of manifolds with vanishing odd-degree cohomology.

Let $M$ be an oriented, closed and simply-connected smooth $6$-manifold with $H^{odd}(M,\ZZ)=0$.  The latter condition implies that $M$ has torsion-free homology using Poincar\'{e} duality and the universal coefficient theorem. Let us consider the following invariants of $M$:
\begin{itemize}
  \item $H:=H^2(M;\ZZ)$ a finitely generated free abelian group,
  \item $\mu_M \colon H \otimes H \otimes H \to \ZZ$, a symmetric homomorphism defined by
    \[
      \mu_M(x\otimes y\otimes z)= \langle x\cup y\cup z, [M] \rangle
    \]
    where $[M]$ is the fundamental class of $M$ and $\langle \cdot , \cdot  \rangle$ the
    Kronecker-pairing,
  \item $w_2(M) \in H^2(M;\ZZ_2) \cong H\otimes \ZZ_2$ the second Stiefel-Whitney class (this isomorphism is induced by the homomorphism of coefficients $\ZZ\to \ZZ_2$),
  \item $p_1(M) \in H^4(M;\ZZ)\cong \Hom_{\ZZ}(H,\ZZ)$ the first Pontrjagin class.
\end{itemize}

We call a quadruple $(H,\mu,w,p)$ a \emph{system of invariants} if $H$ is a finitely generated
free abelian group, $\mu\colon H\otimes H\otimes H \to \ZZ$ a symmetric homomorphism, $w$
an element of $H\otimes \ZZ_2$ and $p \in \Hom_{\ZZ}(H,\ZZ)$.
Two systems of invariants $(H,\mu,w,p)$ and
$(H',\mu',w',p')$ are \emph{equivalent} if there is an isomorphism $\Phi \colon H \to H'$
such that
\[
  \Phi(w)=w',\quad \Phi^\ast(\mu') =\mu,\quad \Phi^\ast(p')=p.
\]
We associate to any simply-connected, closed and oriented smooth $6$-manifold $M$ with vanishing odd-degree integer cohomology the system of invariants
\[
   \mathcal S(M):=\left( H^2(M,\ZZ),\mu_M, w_2(M), p_1(M)  \right).
\]
Then we recall
\begin{thm}[Wall, Jupp, \v Zubr]\label{T:Classification}
  Let $M$ and $N$ be compact, simply-connected, oriented, smooth $6$-manifolds with
  $H^{\text{odd}}(M,\ZZ)=H^{\text{odd}}(N,\ZZ)=0$. Then any isomorphism $\Phi\colon\mathcal S(N)\to
  \mathcal S(M)$ is realized by an orientation-preserving diffeomorphism $M\to N$.
\end{thm}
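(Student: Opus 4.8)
The statement is the ``uniqueness'' half of the Wall--Jupp--\v Zubr classification; the existence of a closed manifold realizing a prescribed abstract system of invariants is a separate matter and is not needed here, since $M$ and $N$ are given. The plan is to reduce everything to simply-connected handlebody theory. First I would note that, as already observed, $H^{\mathrm{odd}}=0$ together with Poincar\'e duality and the universal coefficient theorem forces $H_\ast(M;\ZZ)$ and $H_\ast(N;\ZZ)$ to be torsion-free and concentrated in even degrees, with $H_0=H_6=\ZZ$, $H_4\cong\Hom(H_2,\ZZ)$ and $b_2:=\rk H_2=\rk H$. Since $M$ is closed, smooth, simply-connected of dimension $6$, Smale's handle-trading and handle-cancellation techniques --- using $\pi_1=0$ to remove handles of index $1$ and, dually, $5$, and using torsion-freeness together with the $h$-cobordism theorem to cancel all index-$3$ handles against index-$2$ and index-$4$ ones --- produce a \emph{perfect} handle decomposition of $M$: one $0$-handle, $b_2$ handles of index $2$, $b_2$ handles of index $4$, one $6$-handle, and nothing else. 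Turning this decomposition upside down exhibits $M$ as a double handlebody $M=W\cup_\Sigma W'$, where $W$ is the union of the $0$-handle with the $2$-handles, $W'$ (the $6$-handle together with the $4$-handles, which read upside down is again a $0$-handle with $2$-handles) is glued to $W$ along the common boundary $\Sigma=\partial W$, a closed simply-connected smooth $5$-manifold with torsion-free homology.

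Next I would classify the pieces and the gluing. The handlebody $W$ is a regular neighbourhood of a wedge of $b_2$ smoothly embedded $2$-spheres; since $\pi_1=0$, the attaching circles of the $2$-handles form a standard unlink up to isotopy, so $W$ is a boundary connected sum of $D^4$-bundles over $S^2$, each classified by an element of $\pi_1(\SO(4))\cong\ZZ/2$ that is detected by $w_2$; dually for $W'$. The splitting hypersurface $\Sigma$ is, by the Smale--Barden classification of simply-connected $5$-manifolds, a connected sum of copies of $S^2\times S^3$ and of the nontrivial $S^3$-bundle over $S^2$ (no torsion summands arise here), determined by its homology and its second Stiefel--Whitney class, i.e.\ by data read off from $(H,w)$. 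It then remains to pin down how $W'$ is glued onto $W$ along $\Sigma$; this amounts to recording the isotopy classes of the attaching spheres of the $4$-handles together with their normal framings, relative to the belt spheres of the $2$-handles. Here the triple cup product form $\mu_M$ is precisely the algebraic shadow of this linking/intersection pattern among the handles, while the first Pontrjagin class $p_1\in\Hom(H,\ZZ)$ records the residual normal-framing data --- coming from $\pi_3$ of the orthogonal groups --- not already seen by the homology and $w_2$. One then checks that modifying the handle decomposition (handle slides, creation and cancellation of complementary pairs) alters $(H,\mu,w,p)$ only through the equivalences admitted in the definition of a system of invariants, and that conversely all such equivalences so arise. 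Combined with the $s$-cobordism theorem this yields a normal form, so the abstract isomorphism $\Phi$ can be realised piece by piece by compatible diffeomorphisms $W\to W_N$ and $W'\to W'_N$ respecting the gluings, which patch to a diffeomorphism $M\to N$; it is orientation-preserving because the condition $\Phi^\ast\mu_M'=\mu_M$ matches the fundamental classes with the correct sign.

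\textbf{Main obstacle.} The genuinely hard step is the last one: proving that the gluing of the two handlebodies --- equivalently, the relevant quotient of the mapping class group of $\Sigma\cong\#(S^2\times S^3)\,\#\,\cdots$ by the diffeomorphisms extending over a handlebody --- is captured exactly by $\mu$, $w_2$ and $p_1$. This requires controlling the normal framings of the attaching spheres of the $4$-handles (where $\pi_3$ of the orthogonal groups enters), determining which self-diffeomorphisms of $\Sigma$ extend over the handlebodies, and verifying that no exotic phenomena intervene (here one invokes $\Theta_6=0$, i.e.\ that every homotopy $6$-sphere is standard). It is precisely the bookkeeping tying $p_1$ and $w_2$ to the smooth gluing data --- especially in the non-spin case --- that is delicate; this is where Wall's original spin-case argument needs \v Zubr's refinements, and I expect it to be by a wide margin the most technical part. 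The remaining steps are a systematic, if lengthy, application of simply-connected handlebody theory in dimensions $\geq 5$.
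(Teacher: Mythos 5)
This theorem is not proved in the paper at all: it is a quotation of the literature. The paper attributes the statement that $\mathcal S(M)$ determines the diffeomorphism type to Jupp (building on Wall's spin case, proved via exactly the handlebody strategy you outline), and the statement actually used here --- that \emph{every} isomorphism of systems of invariants is realized by an orientation-preserving diffeomorphism --- to \v Zubr. So what you have written is an attempt to reprove a deep classification theorem from scratch, and as it stands it is an outline of Wall's 1966 argument rather than a proof. The first half (elimination of odd-index handles using $\pi_1=0$ and $H_3=0$, the splitting $M=W\cup_\Sigma W'$ into two $2$-handlebodies, the identification of $W$ as a boundary connected sum of $D^4$-bundles over $S^2$ and of $\Sigma$ via Smale--Barden) is essentially correct. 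But the decisive content of the theorem --- computing the group of isotopy classes of diffeomorphisms of $\Sigma$, determining which of them extend over the handlebodies, showing that the residual gluing data is captured exactly by $(\mu,w_2,p_1)$, and, crucially for the form of the statement needed in this paper, showing that \emph{every} abstract isomorphism of systems of invariants is induced by a diffeomorphism --- is precisely the step you defer with ``one then checks'' and label as the main obstacle. That step \emph{is} the theorem; deferring it leaves a genuine gap, not a technical loose end.

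Two further points. First, your bookkeeping attaches $p_1$ to the wrong place: the attaching $3$-spheres of the $4$-handles have normal bundles of rank $2$, so their framings are classified by $\pi_3(\SO(2))=0$ and carry no data; in Wall's analysis the $p_1$-type information enters through the classification of the handlebodies and the diffeotopy group of $\#(S^2\times S^3)$ (where $\pi_3(\SO(3))$ appears), so the ``residual normal-framing data'' you invoke does not exist where you place it. Second, the non-spin case and the realization statement are not merely ``lengthy handlebody bookkeeping'': Jupp and \v Zubr rely on surgery-theoretic and homotopy-classification input (normal invariants over the relevant classifying spaces, and \v Zubr's refinement allowing torsion and realizing arbitrary isomorphisms), which your sketch does not engage with. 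The appropriate treatment in the context of this paper is the one the authors give: cite Wall, Jupp and \v Zubr for the precise statement, rather than attempt a self-contained proof.
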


The fact that the equivalence class of the system of invariants $S(M)$ determines the diffeomorphism type of $M$ was shown by Jupp \cite[Theorem 1]{MR0314074}, building upon work of Wall \cite{MR0215313} who proved the spin case. Note that they also allowed nonvanishing (torsion-free) odd-degree cohomology. The fact that every equivalence of systems of invariants is realized by a diffeomorphism was proven by \v Zubr in \cite[Theorem 3]{MR970082}. He also allowed for torsion in the cohomology. See  \cite{MR1365849} for a nice overview on the topic.

Combining this with part (a) of Theorem \ref{thm:gkm6dim} we obtain part (b). The remaining statement in (c), i.e., the fact that the homotopy class on an invariant almost
complex structure $J$ is determined by the signed GKM graph, follows from  \cite[Theorem 9]{MR0215313}, where Wall showed that the homotopy class of $J$ is uniquely determined by the first Chern class $c_1(M,J)$.

\section{Tolman, Woodward, and Eschenburg}\label{sec:TWE}

In \cite{Tolman}, Tolman constructed the first example of a compact,  simply-connected symplectic manifold with an Hamiltonian torus action with finite fixed point set, which does not admit any invariant K\"ahler structure. To obtain her example, she started with two six-dimensional toric symplectic manifolds $M_1$ and $M_2$, restricted the actions to two-dimensional subtori, and glued two open subsets of these $T^2$-manifolds together to obtain her example $M_3$.
\begin{lem}\label{lem:tolman}
Tolman's example satisfies all assumptions of Corollary \ref{cor:mainthm}.
\end{lem}
\begin{proof}
The example is simply-connected by \cite[Lemma 4.1]{Tolman}, because there is a component of the momentum map all of whose critical points have even index. It thus suffices to check that all its isotropy groups are connected, and that the action satisfies the GKM conditions.

The connectedness of the isotropy groups was mentioned in \cite{Tolman}, but we include an argument for completeness. We only need to show that the isotropy groups of $M_1$ and $M_2$ are connected. More precisely, the first of these manifolds is $\CC P^1\times \CC P^2$, with the $T^2$-action
\[
(s,t) \cdot ([x_0:x_1],[y_0:y_1:y_2]) = ([sx_0:x_1],[sy_0:ty_1:y_2]),
\]
which obviously has all isotropy groups connected. The second action can be understood via its momentum image, see also \cite[Section 2]{Woodward}. Consider the six-dimensional toric manifold which has as momentum image a polytope in $(\mft^3)^*\cong \RR^3$ whose projection onto the $xy$-plane is as follows:
\begin{center}
\begin{tikzpicture}
\draw[step=1, dotted, gray] (-3.5,-4.5) grid (3.5,2.5);

\draw[very thick] (-2,1) -- ++(4,-4) -- ++(-4,0) -- ++(0,4);
\draw[very thick, dashed] (-2,1)--++(1,-2)--++(1,-1)--++(2,-1);
\draw[very thick, dashed] (-1,-1)--++(0,-1)--++(1,0);
\draw[very thick, dashed] (-1,-2)--++(-1,-1);

  \node at (-2,1)[circle,fill,inner sep=2pt]{};

  \node at (2,-3)[circle,fill,inner sep=2pt]{};

  \node at (-2,-3)[circle,fill,inner sep=2pt]{};

  \node at (0,-2)[circle,fill,inner sep=2pt]{};

  \node at (-1,-1)[circle,fill,inner sep=2pt]{};

  \node at (-1,-2)[circle,fill,inner sep=2pt]{};
\end{tikzpicture}
\end{center}
In this and the following pictures of x-rays, the lines, dashed or not, are the images of closures of the
nontrivial orbit type strata.
%
The three-dimensional polytope has the outer triangle at $z=0$ and the inner triangle at $z=1$. One considers the $T^2=T^2\times \{1\}\subset T^3$-subaction. The isotropy groups of the $T^3$-action can be read off as the intersections between the $T^3$-isotropies and $T^2$, and one easily checks that they are all connected: The occurring $T^3$-isotropy groups are the connected subgroups whose Lie algebras are given by all possible intersections of kernels of (one or more) weights at a single fixed point of the action, and the weights are given by the edges in the graph above. For instance, consider the upmost fixed point, and the two edges in direction $(1,-2,1)$ and $(1,-1,0)$. The intersection of their kernels is the subgroup $\{(s,s,s)\}$, whose intersection with $T^2$ is the trivial group, and in particular connected.

The x-ray of the invariant symplectic structure obtained from the gluing process is as follows, see \cite[p.\ 304]{Tolman}:
\begin{center}
\begin{tikzpicture}
\draw[step=1, dotted, gray] (-3.5,-4.5) grid (3.5,1.5);

\draw[very thick] (-2,0) -- ++(1,0) -- ++(3,-3) -- ++(-4,0) -- ++(0,3);
\draw[very thick, dashed] (-2,0)--++(2,-2)--++(2,-1);
\draw[very thick, dashed] (-1,0)--++(0,-2)--++(1,0);
\draw[very thick, dashed] (-1,-2)--++(-1,-1);

  \node at (-2,0)[circle,fill,inner sep=2pt]{};

  \node at (-1,0)[circle,fill,inner sep=2pt]{};

  \node at (-2,-3)[circle,fill,inner sep=2pt]{};

  \node at (0,-2)[circle,fill,inner sep=2pt]{};
  \node at (2,-3)[circle,fill,inner sep=2pt]{};
  \node at (-1,-2)[circle,fill,inner sep=2pt]{};
\end{tikzpicture}
\end{center}
This image implies that at every fixed point any two weights of the isotropy representation are linearly independent, i.e., that the action satisfies the GKM conditions.
\end{proof}
The reason why $M_3$ does not admit an invariant K\"ahler structure is that the shape of the momentum image of an invariant symplectic structure would be incompatible with Atiyah's convexity theorem \cite{Atiyah} for orbit closures of the (holomorphic) action of the complexified torus, see \cite[Section 3]{Tolman}.

\begin{rem}
The manifold $M_3$ was constructed by gluing two open $T$-manifolds together. In general it is not
  clear that the resulting manifold does not depend on this diffeomorphism, as the example of
  exotic spheres shows, cf. \cite{MR0082103}. But since the GKM graph of $M_3$ does not depend on
  such a diffeomorphism we obtain from Theorem \ref{thm:gkm6dim} the following
\end{rem}

\begin{cor}
The diffeomorphism type of $M_3$ is unique.
\end{cor}

In \cite{Woodward}, Woodward constructed a very similar example in a different way: he applied $\U(2)$-equivariant symplectic surgery to the full flag manifold $\U(3)/T^3$. The result is a symplectic manifold with very much the same properties as Tolman's example, but with the additional property that the $T^2$-action extends to a multiplicity-free Hamiltonian $\U(2)$-action.

\begin{lem} Woodward's example satisfies all assumptions of Corollary \ref{cor:mainthm}.
\end{lem}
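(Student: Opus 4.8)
The plan is to argue exactly as in the previous lemma, reducing everything to the x-ray of Woodward's example, which is computed in \cite{Woodward}.

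First I would recall the construction. Woodward's manifold $M$ is obtained from the full flag manifold $\U(3)/T^3$ by a $\U(2)$-equivariant symplectic surgery, where $\U(2)\subset\U(3)$ is the upper-left block; the $T^2$-action in Corollary \ref{cor:mainthm} is the restriction of the resulting Hamiltonian $\U(2)$-action to the maximal torus $T^2=\{\mathrm{diag}(s,t,1)\}\subset\U(2)$. Since $\U(3)/T^3$ is compact symplectic of dimension $6$, the surgery preserves compactness, symplecticity and dimension, and the restriction of a Hamiltonian action to a subtorus is again Hamiltonian, $M$ is a compact $6$-dimensional symplectic manifold with a Hamiltonian $T^2$-action. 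Simple connectivity I would obtain \emph{a posteriori}, once the GKM conditions have been checked, from the standard fact that a compact Hamiltonian torus manifold with isolated fixed points is simply connected: for a generic circle $S^1\subset T^2$ one has $M^{S^1}=M^{T^2}$, and the corresponding moment map is a Morse function all of whose critical points have even index, so $M$ carries a CW structure with cells only in even degrees. (This also re-proves $H^{\mathrm{odd}}(M,\ZZ)=0$, which is part of the hypotheses of Theorem \ref{thm:gkm6dim}.)

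Next I would reproduce Woodward's x-ray from \cite{Woodward}. Away from the region modified by the surgery the $T^2$-action on $M$ coincides with that on $\U(3)/T^3$, where the isotropy weights at the fixed points are the restrictions of the roots $e_i-e_j$ of $\U(3)$ to $T^2$, namely $(1,-1)$, $(1,0)$, $(0,1)$ and their negatives; these are pairwise linearly independent and primitive, so there the fixed points are isolated, the one-skeleton consists of invariant two-spheres, all isotropy groups are connected (one checks this by intersecting the relevant weight kernels with $T^2$, exactly as in the previous lemma), and a generic flag has trivial $T^2$-stabilizer, so there are no exceptional orbits. It then remains only to inspect the fixed points, invariant spheres and isotropy groups produced by the surgery. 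Reading these off from Woodward's x-ray one sees, just as for Tolman's $M_3$, that at each new fixed point any two of the isotropy weights are linearly independent, so that the GKM conditions hold; and that the slopes of the one-dimensional strata are primitive lattice vectors whose kernels meet $T^2$ in connected subgroups, while the higher-codimensional strata carry trivial isotropy, so that all isotropy groups are connected.

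Putting this together, $M$ satisfies all the hypotheses of Corollary \ref{cor:mainthm}. The step requiring the most care is the bookkeeping around the surgery region: one must ensure that the cut-and-paste does not create a fixed point at which two weights become parallel, nor an orbit with disconnected (in particular finite nontrivial) stabilizer, and that the new part does not alter $\pi_1$. Once Woodward's explicit x-ray is in hand this is routine, being entirely parallel to the verification already carried out for Tolman's example.
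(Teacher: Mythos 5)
Your overall structure (GKM conditions from the x-ray, plus a separate check of stabilizers away from and inside the surgery region, plus simple connectedness via equivariant Morse theory) matches the paper's strategy, and the Morse-theoretic remark on $\pi_1$ and $H^{odd}$ is fine. However, there is a genuine gap at the decisive point: you claim that the connectedness of all isotropy groups of the surgered part "can be read off from Woodward's x-ray," and this is not justified. As Remark \ref{rem:xraygkm} makes explicit, the x-ray determines the isotropy weights only up to a positive multiple, and it records only the momentum images of the fixed point sets $M^H$; it does not record the component structure of the stabilizers. The slope of a one-dimensional stratum is of course always a primitive direction after rescaling, but the actual weight could be a nontrivial multiple of it, in which case the generic stabilizer of the corresponding sphere is a disconnected group $\ker(k\alpha)$ with the same identity component and the same x-ray picture; likewise, finite nontrivial stabilizers off the one-skeleton need not be visible as separate strata in the picture. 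So your argument is circular exactly where care is needed: primitivity of the weights (equivalently, connectedness of the stabilizers along the one-skeleton) is an input required to read the GKM labels off the x-ray, not an output of the x-ray. The linear-independence part of the GKM conditions is insensitive to this scaling ambiguity, so that half of your argument is fine, but connectedness is not.

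The way to close the gap is the one the paper takes: go back into the symplectic cut itself. Since the action on the uncut locus agrees with the $T^2$-action on $\U(3)/T^3$, whose stabilizers are connected, only the newly created points on the level of the cut must be examined. Woodward's construction (p.~318 of \cite{Woodward}) shows that the isotropy groups occurring in $\mu^{-1}(a)\cap Y_+$ are $\{1\}\times\U(1)$ and $\{1\}$, which after passing to the quotient by $\U(1)_{1,2}=\{(z,z^2)\mid z\in\U(1)\}$ become $T^2$ and $\U(1)_{1,2}$; one then checks directly that the left $T^2$-actions on $\U(2)/T^2$ and $\U(2)/\U(1)_{1,2}$ have connected stabilizers. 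This construction-level verification (not the x-ray) is what legitimately gives the hypothesis "connected isotropy groups" of Corollary \ref{cor:mainthm}, and, once it is in place, it also guarantees that the weights are primitive so that the x-ray determines the GKM graph, as you intended.
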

\begin{proof} The example is simply-connected, for the same reason as in Lemma \ref{lem:tolman}. By \cite[Proposition 3.6]{Woodward} the x-ray of his example is the same as that of Tolman's example. Thus, his example satisfies the GKM conditions.

The example is constructed from $\U(3)/T^3$ by symplectic cutting with respect to a local $T^2$-action that commutes with the $\U(2)$-action acting by left multiplication in the upper left block. It follows from a close look at Woodward's construction \cite[p.\ 318]{Woodward} that all isotropy groups of his example are connected. More precisely, as the $T^2$-isotropy groups on $\U(3)/T^3$ are connected, all that is left to check are the newly introduced isotropy groups on the level of the cut. Woodward showed that, in his notation, the isotropy groups occurring in the space $\mu^{-1}(a)\cap Y_+$ are $\{1\}\times \U(1)$ and $\{1\}$, which, after passing to the quotient with respect to $\U(1)_{1,2} = \{(z,z^2)\mid z\in \U(1)\}$, give the isotropy groups $T^2$ and $\U(1)_{1,2}$. Now one observes that all the isotropy groups of the $T^2$-action on the homogeneous spaces $\U(2)/T^2$ and $\U(2)/\U(1)_{1,2}$ by left multiplication are connected. This implies the claim.
\end{proof}

In \cite{1812.09689v1}, we constructed a symplectic structure and an Hamiltonian $T^2$-action on Eschenburg's twisted flag manifold. This manifold can be defined as $M=\SU(3)//T$, where the torus $T=T^2$ acts as
\[(s,t)\cdot A=\begin{pmatrix}
s^2t^2& & \\ & 1 &\\ & & 1
\end{pmatrix}
A
\begin{pmatrix}
\overline{s}& & \\ & \overline{t} &\\ & & \overline{st}
\end{pmatrix}\]
for $s,t\in S^1$. The symplectic form on $M$ is such that the action of $T'=T^2$ induced by

\[(s,t)\cdot A= \begin{pmatrix}
st& & \\ & \overline{s} &\\ & & \overline{t}
\end{pmatrix}A\]
is Hamiltonian. One observes that this action extends in a similar way as Woodward's example: with respect to an appropriately chosen symplectic form the extension of the $T'$-action by left multiplication to $\U(2)\cong {\mathrm{S}}(\U(1)\times \U(2))$ is a multiplicity-free Hamiltonian action, see \cite[Section 3.1]{1812.09689v1}. In \cite{1812.09689v1}, we showed that, up to rescaling of the edges, the x-ray of the action is given by \begin{center}
\begin{tikzpicture}
\draw[step=1, dotted, gray] (-3.5,-4.5) grid (3.5,2.5);

\draw[very thick] (1,1) -- ++(-3,0) -- ++(4,-4) -- ++(0,3) -- ++(-1,1);
\draw[very thick, dashed] (2,-3)--++(-1,+2)--++(0,2)++(0,-2)--++(-1,1)--++(2,0)++(-2,0)--++(-2,1);
  \node at (1,1)[circle,fill,inner sep=2pt]{};
  \node at (1.3,1.3){$p_6$};

  \node at (-2,1)[circle,fill,inner sep=2pt]{};
  \node at (-2.3,1.3){$p_1$};

  \node at (1,-1)[circle,fill,inner sep=2pt]{};
\node at (1.3,-0.7){$p_2$};

  \node at (0,0)[circle,fill,inner sep=2pt]{};
  \node at (0.3,0.3){$p_5$};

  \node at (2,0)[circle,fill,inner sep=2pt]{};
  \node at (2.3,0.3){$p_3$};

  \node at (2,-3)[circle,fill,inner sep=2pt]{};
  \node at (2.3,-3.3){$p_4$};

\end{tikzpicture}
\end{center}
where the $p_i$ are the fixed points.
Note that this picture is uniquely determined by the weights, i.e., the slopes of the edges, and the lengths of the two edges $p_1p_6$ and $p_6p_3$. In \cite{1812.09689v1} we constructed a two-parameter family of appropriate symplectic forms, which realize variations of these two lengths. But note that we did not determine the precise range of occuring ratios between these lengths. We will not need this information for what follows.

\begin{lem}
The $T'$-action on $\SU(3)//T$ satisfies all assumptions of Theorem \ref{thm:gkm6dim}.
\end{lem}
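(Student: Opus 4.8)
The plan is to verify the three hypotheses of Theorem~\ref{thm:gkm6dim} for the $T'$-action on $M=\SU(3)//T$: that $H^{\mathrm{odd}}(M,\ZZ)=0$, that the action satisfies the GKM conditions, and that for every $p\notin M_1$ the isotropy group $T'_p$ is contained in a proper subtorus of $T'$. First I would recall that the $T'$-action is Hamiltonian with finite fixed point set (the six points $p_1,\dots,p_6$ appearing in the x-ray), and that it extends to a multiplicity-free Hamiltonian $\U(2)$-action, as recorded just above from \cite[Section 3.1]{1812.09689v1}. Since $M$ is a biquotient of $\SU(3)$ by a torus, its integral cohomology is known to be torsion-free and concentrated in even degrees (this can be cited from \cite{1812.09689v1}, or deduced from the fact that $M$ is the Eschenburg flag, whose cohomology ring is computed in Section~\ref{sec:cohomofEF} below); in any case the vanishing $H^{\mathrm{odd}}(M,\ZZ)=0$ follows, and this also guarantees via \cite[Theorem 5.1]{MR2308029} that $H^*_{T'}(M,\ZZ)$ is a free module, which is what the later arguments need.

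Next I would check the GKM conditions. Orientability is clear since $M$ is a symplectic manifold. The fixed point set is finite, consisting of the six points $p_i$. For the one-skeleton: one reads off from the x-ray that at each fixed point any two weights of the isotropy representation are linearly independent — indeed the one-dimensional polytopes (edges) emanating from each vertex in the displayed x-ray picture have pairwise distinct slopes — so each $p_i$ lies on only finitely many invariant two-spheres and $M_1$ is a finite union of such spheres. This is exactly the same kind of verification already carried out for Tolman's example in the lemma above, via \cite[p.\ 304]{Tolman}, so I would phrase it in parallel: the shape of the x-ray forces linear independence of weights at each fixed point, hence the GKM conditions hold.

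For the isotropy condition, I would argue exactly as in the proof that Woodward's example satisfies the assumptions of Corollary~\ref{cor:mainthm}: since the $T'$-action extends to a multiplicity-free $\U(2)$-action built from $\SU(3)//T$ by the same kind of cutting construction, all isotropy groups of the $T'$-action are connected, and connectedness immediately implies the required containment in a proper subtorus. Concretely, the $T$-isotropy on $\SU(3)$ descends to connected isotropy on the biquotient, and the only new isotropy groups introduced by the construction are those already analyzed in \cite{1812.09689v1}, which are connected; alternatively one can simply invoke \cite{1812.09689v1} for the statement that all isotropy groups are connected. In fact, since all isotropy groups are connected, the weaker hypothesis of Theorem~\ref{thm:gkm6dim} and hence also the hypotheses of Corollary~\ref{cor:mainthm} hold.

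The main obstacle — really the only place requiring care rather than citation — is pinning down the connectedness of the isotropy groups of the $T'$-action rigorously, since $M$ is presented as a \emph{left} quotient $\SU(3)//T$ but the Hamiltonian action is the $T'$-action by left multiplication from a different torus; one must make sure that passing to the biquotient does not create a finite stabilizer component. I expect this is handled by the analysis already present in \cite{1812.09689v1}, so in practice every step here is a short verification or a citation, and the lemma follows by assembling them.
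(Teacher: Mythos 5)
Your verification of two of the three hypotheses coincides with the paper's: the vanishing of $H^{\mathrm{odd}}(M,\ZZ)$ follows from the known integral cohomology of the Eschenburg flag (the paper cites Eschenburg's computation \cite{MR1160094}), and the GKM conditions are read off from the shape of the x-ray, exactly as you do. The gap is in the third hypothesis, the condition on the isotropy groups of points outside the one-skeleton. You argue ``exactly as for Woodward's example,'' asserting that all isotropy groups are connected because the $T'$-action extends to a multiplicity-free $\U(2)$-action ``built from $\SU(3)//T$ by the same kind of cutting construction.'' This is a false analogy: the Eschenburg flag is a biquotient, not a symplectic cut of $\U(3)/T^3$, and its $\U(2)\cong \mathrm{S}(\U(1)\times\U(2))$-extension is by left multiplication, so Woodward's analysis of the isotropy groups created by his cut says nothing about this example. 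Nor is it legitimate to say that the $T$-isotropy on $\SU(3)$ ``descends'' to connected isotropy on the biquotient: the $T'$-stabilizer of a class $[A]$ consists of those $t'$ that move $A$ within its $T$-orbit, and nothing in your argument rules out finite components. Your final paragraph concedes exactly this and defers the decisive step to a hoped-for statement in \cite{1812.09689v1}; no such statement is invoked in the paper, and indeed the paper is deliberately more cautious: this lemma claims only the hypotheses of Theorem \ref{thm:gkm6dim}, not the connected-isotropy hypothesis of Corollary \ref{cor:mainthm} that is claimed for Tolman's and Woodward's examples.

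The paper closes this step by a different and cheaper route: it observes that the weights of the $T'$-action, as computed in \cite{1812.09689v1}, are primitive elements of $\ZZ^2\subset(\mft^2)^*$, and then applies \cite[Lemma 6.1]{MR3456711} to conclude that for every $p\notin M_1$ the isotropy group $T'_p$ is contained in a proper subtorus (alternatively, one computes the isotropy groups of the biquotient action directly). To repair your proof you should replace the appeal to Woodward's cutting construction by one of these two arguments; as it stands, the key hypothesis is asserted but not verified.
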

\begin{proof}
Because $\SU(3)$ is simply-connected and $T$ connected, the biquotient $\SU(3)//T$ is simply-connected. The shape of the x-ray described above implies that the action satisfies the GKM conditions. The integer cohomology of $\SU(3)//T$ was computed by Eschenburg in \cite{MR1160094}; in particular it vanishes in odd degrees.

To show that the isotropy groups of this action satisfy the assumptions needed in order for the Chang-Skjelbred lemma to hold true, we only have to observe that the weights of the action, as computed in \cite{1812.09689v1}, are primitive elements in $\ZZ^2\subset \RR^2\cong (\mft^2)^*$, and apply \cite[Lemma 6.1]{MR3456711}. Alternatively, one can compute the isotropy groups directly.
\end{proof}

\begin{thm} \label{thm:TWE} The manifolds from Tolman's and Woodward's examples are both diffeomorphic to the Eschenburg flag manifold $\SU(3)//T$, via diffeomorphisms that respect the homotopy class of a compatible almost complex structure.
\end{thm}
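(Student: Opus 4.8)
The plan is to apply Corollary~\ref{cor:mainthm}, which reduces the theorem to a comparison of signed GKM graphs. By the three preceding lemmas, each of the three manifolds --- Tolman's $M_3$, Woodward's example, and $\SU(3)//T'$ --- carries a Hamiltonian $T^2$-action with connected isotropy groups satisfying the GKM conditions on a simply-connected compact $6$-dimensional symplectic manifold, so Corollary~\ref{cor:mainthm} applies to each. Moreover, as noted in Remark~\ref{rem:xraygkm}, when all isotropy groups in the one-skeleton are connected the weights are primitive, and therefore the full signed GKM graph (labels included, up to the sign ambiguity coming from orientations, which the signed graph resolves) can be recovered from the x-ray. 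So the whole problem becomes: show that the x-rays of the three examples agree, up to an automorphism of $T^2$ and a rescaling of edge lengths (which does not affect the GKM graph, only the momentum polytope).

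First I would recall that Woodward already proved in \cite[Proposition 3.6]{Woodward} that the x-ray of his example coincides with that of Tolman's, so those two cases are immediate. The remaining and essential step is to identify the x-ray of the $T'$-action on $\SU(3)//T$ --- displayed above with fixed points $p_1,\dots,p_6$ --- with that of Tolman's example. Here I would place the two pictures side by side and exhibit an explicit element of $\mathrm{GL}(2,\ZZ)$ carrying one onto the other: match up the six vertices, check that the edges (i.e. the one-dimensional polytopes, equivalently the invariant two-spheres) correspond, and verify that the weights --- which are the primitive integer vectors in the direction of the edges --- are intertwined by the chosen lattice automorphism. Since the underlying abstract graph has only six vertices, this is a finite, essentially combinatorial verification; one reads the weights off the slopes in each picture and confirms they transform correctly. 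Because the $p_1p_6$ and $p_6 p_3$ edge lengths in the Eschenburg picture can be varied (and the other lengths are then forced), one does not need to match lengths, only combinatorial type and slopes, so the potential worry about the precise range of length ratios is irrelevant.

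Once the signed GKM graphs are identified, Corollary~\ref{cor:mainthm} gives a diffeomorphism between each of Tolman's and Woodward's manifolds and $\SU(3)//T$, respecting the homotopy class of a compatible almost complex structure. For the final sentence, I would invoke the already-cited fact \cite[Theorem 2]{Eschenburg2}, \cite{EscherZiller}, \cite[Section 4]{1812.09689v1} that $\SU(3)//T$ admits a K\"ahler structure; transporting it along the diffeomorphism endows Tolman's and Woodward's manifolds with a (necessarily noninvariant, by Tolman's momentum-image obstruction) K\"ahler structure.

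The main obstacle is purely the bookkeeping in the middle step: one must fix compatible choices of generators of $T^2$ for each example so that the x-ray pictures --- drawn in \cite{Tolman} and in \cite{1812.09689v1} possibly with different conventions and different placements in $(\mft^2)^*$ --- are genuinely being compared in the same coordinates, and then pin down the single lattice automorphism that works simultaneously for all edges. This is where an error would most easily creep in, so I would carry it out by explicitly tabulating, for each of the six fixed points in each example, the (at most three) edge directions emanating from it and checking that a single $\psi \in \mathrm{GL}(2,\ZZ)$ intertwines them; the signed refinement then just amounts to checking one consistent orientation convention. No deep input beyond Theorem~\ref{thm:gkm6dim} / Corollary~\ref{cor:mainthm} and the cited K\"ahler structure is needed.
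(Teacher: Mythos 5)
Your proposal is correct and takes essentially the same approach as the paper: check the hypotheses via the three preceding lemmas, use primitivity of the weights together with Remark~\ref{rem:xraygkm} to reduce everything to matching x-rays (with Woodward's case handled by his Proposition 3.6), and then transport the known K\"ahler structure on $\SU(3)//T$ along the resulting diffeomorphism. The only piece you leave as bookkeeping --- the explicit lattice automorphism identifying the Tolman/Woodward x-ray with the Eschenburg one --- is exactly what the paper records, namely the shear $(x,y)\mapsto(x,x+y)$ followed by a reflection, up to rescaling of edges.
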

\begin{proof}
We have shown above that all three actions, Tolman's, Woodward's and the example on the Eschenburg flag, satisfy the assumptions of Theorem \ref{thm:gkm6dim} respectively Corollary \ref{cor:mainthm}. Hence their diffeomorphism type is determined by their GKM graphs. We now observe that the x-ray of Tolman's (and Woodward's) example turns (up to rescaling) into that of the Eschenburg example after applying the shear mapping $(x,y)\mapsto (x,x+y)$, followed by a reflection. As in all three examples all occurring weights are primitive elements in $\ZZ_{\mft}^*$, and taking into account Remark \ref{rem:xraygkm}, it follows that the signed GKM graphs of the three examples are isomorphic.

\end{proof}

\begin{rem}
In \cite{MR1160094} Eschenburg computed the cohomology of $M=\SU(3)//T$ via the surjection $H^*(BT)\rightarrow H_T^*(\SU(3))\cong H^*(M)$. We fix the isomorphism $H^*(BT)\cong \mathbb{Z}[X_1,X_2]$ with $X_i$ in degree $2$ induced by the standard basis of $T=T^2$. Eschenburgs methods yield (in a  presentation that is slightly different from \cite{MR1160094}) \[H^*(M)\cong \mathbb{Z}[X_1,X_2]/(X_1^2+3X_1X_2+X_2^2,~X_1^2X_2+X_1X_2^2).\]
The GKM machinery provides a different description of the cohomology ring, which is more complicated but allows for an explicit description of the characteristic classes through the formulas from Proposition \ref{prop:Formeln}. Translating between the two is a rather lengthy but straight forward computation. One obtains that, with respect to the above isomorphism, the Chern classes are
\[
  c_1(M) = 4X_1 +2X_2,\quad c_2(M)=6X_1^2+6X_1X_2,\quad c_3(M)=-6 X_1^2X_2,
\]
the Stiefel-Whitney classes are all zero and the first Pontrjagin class is given by
\[
  p_1(M)  = -8X_1X_2.
\]
Six-dimensional Hamiltonian GKM manifolds with six fixed points were also considered in \cite{Morton}; see Section 4.3 therein for the computation of the Chern classes of this class of examples via GKM theory.

\end{rem}

\begin{rem} Theorem \ref{thm:TWE} is a statement about the diffeomorphism type of Tolman's
and Woodward's example, not the equivariant diffeomorphism type. If one
is interested in equivariant diffeomorphism one first needs to observe
that when one speaks about these examples, one in fact means a whole
variety of Hamiltonian $T^2$-manifolds: As Tolman's construction depends
on the choice of a gluing map, it is a priori not uniquely defined up to
equivariant symplectomorphism, as observed in \cite[Remark 2.7]{1912.02785v2}. Moreover, Woodward considers in his recap of Tolman's example
\cite[Section 2]{Woodward} a variant of her construction, in which he glues
different manifolds. While the x-ray of these examples coincide with
that of Tolman's example, so that Theorem 4.6 is applicable to all of
them, it is not known if they are equivariantly diffeomorphic.

To show the $T^2$-equivariance of the examples one unfortunately cannot
apply the existing theory on complexity one spaces, see e.g. \cite{MR2128388}
as these $T^2$-manifolds are not tall. A possibility how one might prove
$S^1$-equivariant diffeomorphism of these examples, with respect to some
subcircle $S^1\subset T^2$,
is outlined in \cite[Remark 2.7]{1912.02785v2}.
Instead of restricting the action one can also try to extend it: both
the action on Woodward's example and on the Eschenburg flag admit an
extension to a multiplicity-free $\U(2)$-action, so that one might
apply Knop's solution of Delzant's conjecture on multiplicity-free
actions \cite{MR2748401}.
Note that it
is not known if Tolman's original example(s) and Woodward's variant
admit an extension to $\U(2)$.
This approach is the topic of an ongoing
master thesis of Nikolas Wardenski.
\end{rem}

It is known that the Eschenburg flag admits a K\"ahler structure; this is implicit in work of Eschenburg \cite[Theorem 2]{Eschenburg2} and Escher--Ziller \cite{EscherZiller}, and explicit in \cite[Section 4]{1812.09689v1}. This K\"ahler structure can not be $T^2$-invariant by the work of Tolman \cite{Tolman}, which applies equally to the Eschenburg flag as the arguments operate on the level of the x-ray.  We will now investigate the occurring forms more closely and show that not only does there exist some K\"ahler structure, but actually the $T^2$-invariant symplectic form which we constructed on the Eschenburg flag in \cite{1812.09689v1} is a K\"ahler form itself (with the property that a compatible complex structure can not be $T^2$-invariant).

In order to see this we remind the reader of how the $T^2$-invariant symplectic form $\omega_H$ and a Kähler form $\omega_K$ on $\SU(3)//T$ can be defined  (see \cite[p.\
  77f]{Voisin} and \cite[Section 3.1]{1812.09689v1}) and then show that the two are symplectomorphic:
  \[
  \SU(3)//T = \SU(3)\times_{\U(2)} \U(2)/T^2
  \] is the projectivization $\mathbb P(E)$ of the rank 2 complex vector
     bundle
     \[
     E = \SU(3)\times_{\U(2)} \CC^2 \to \CC P^{2},
     \] cf.\ \cite[Proposition 4.3]{1812.09689v1}. Every (topological) vector
     bundle over $\CC P^2$ admits a holomorphic structure \cite[p.\ 63]{MR2815674} and therefore
     $\mathbb P(E)$ is a complex manifold such that $\pi\colon \mathbb P(E) \to \CC P^2$ is
     a holomorphic map. Observe that we have a canonical isomorphism of fibers ${\mathbb P}(E_p)\cong \U(2)/T^2$ up to elements of $\U(2)$.

     We fix the Hermitian metric $h$ on $E \to \CC P^2$ given by the standard Hermitian metric on each fiber $\CC^2$. Then we obtain a Hermitian
     metric on $\mathcal O_{\mathbb P(E)}(1)$, the dual of the tautological bundle over every fiber of
     $\mathbb P(E)$. Denote by $\omega_F$ the Chern curvature of the induced Hermitian metric on
     $\mathcal O_{\mathbb P(E)}(1)$. It restricts on every fiber of $\mathbb P(E)$ to the same form on ${\mathbb P}(E_p)\cong \CC P^1\cong \U(2)/T^2$, namely the ($\U(2)$-invariant) Fubini-Study form on $\CC P^1$, where the isomorphism ${\mathbb P}(E_p)\cong \CC P^1$ is induced by an isometry $E_p\cong \CC^2$. Observe that by our choice of metric the induced isomorphism ${\mathbb P}(E_p)\cong \U(2)/T^2$ is the canonical one above. Letting $\omega_B$ be the Fubini-Study form on $\CC P^2$, then
     for $C>0$ big enough the $2$-form
  \[
    \omega_K:=\omega_F + C \cdot \pi^{\ast}(\omega_B)
  \]
  is a Kähler form on $\mathbb P(E)$ (see \cite[Proposition 3.18]{Voisin}). For the construction of $\omega_H$
  (see \cite[Section 3.1]{1812.09689v1}) we can average $\omega_F$ over $T^2$
  \[
    \widetilde \omega_F = \int_{T^2} t^\ast(\omega_F)  \,dt
  \]
  and set
  \[
    \omega_H:= \widetilde\omega_F  + C \cdot \pi^{\ast}(\omega_B)
  \]
  which is still a symplectic form (after possibly replacing $C$ by a bigger constant) because $\omega_F$ and $\widetilde \omega_F$ restrict to the same form on fibers (c.f.\
  \cite[Theorem 2.1]{1812.09689v1}). The form $\pi^*(\omega_B)$ is $T^2$-invariant by the construction of the action on $E$ so $\omega_H$ is $T^2$-invariant. Since averaging a closed form over a compact group does not change the deRham class, we have $\omega_F=\widetilde \omega_F+d\eta$ for some $1$-form $\eta$. Note that $d\eta$ restricts to $0$ on fibers as the restrictions of $\omega_F$ and $\widetilde \omega_F$ agree. In particular the restrictions of $\omega_F+td\eta$ agree and are symplectic for any $t\in\mathbb{R}$. By possibly replacing $C$ with an even bigger constant we achieve that $\omega_t=\omega_F+td\eta+C\cdot\pi^*(\omega_B)$ is symplectic for all $t\in [0,1]$. Thus $\omega_H$ and $\omega_K$ are joined by a path through symplectic forms in the same deRham class and hence are symplectomorphic by Moser's trick.

We sum up the above discussion in the theorem below. In a previous version of this article we only stated that one could pick $\omega_H$ and $\omega_K$ from the same deRham class. We are very grateful to Nicholas Lindsay and Dmitri Panov for pointing us toward the stronger statement below and providing helpful comments regarding its proof.
\begin{thm}\label{thm:Kahler}
On the manifold(s) discussed in this section there is a $T^2$-invariant symplectic form (the action coming from the one on the Eschenburg flag) which admits a compatible complex structure but no $T^2$-invariant compatible complex structure.
\end{thm}

\bibliographystyle{acm}
\bibliography{GKMdimension6-revision.bbl}
\end{document}